\newtheorem{lemma}{Lemma}[section]
\newtheorem{theorem}{Theorem}[section]
\newtheorem{remark}{Remark}[section]
\title[\resizebox{4.6in}{!}{On the sum of the twisted Fourier coefficients of Maass forms by M\"{o}bius function}]{On the sum of the twisted Fourier coefficients of Maass forms by M\"{o}bius function}
\author{K Venkatasubbareddy}
\address{University of Hyderabad\\ 
Hyderabad\\
Telangana\\
India}
\email{20mmpp02@uohyd.ac.in}
\author{A. Kaur}
\address{University of Hyderabad\\ 
Hyderabad\\
Telangana\\
India}
\email{amrinder1kaur@gmail.com}
\author{A. Sankaranarayanan}
\address{University of Hyderabad\\ 
Hyderabad\\
Telangana\\
India}
\email{sank@uohyd.ac.in}
\date{}
\begin{document}

\baselineskip=17pt

\begin{abstract}
In this paper, we study non-trivial upper bounds for the sum $\sum \limits_{n \in S} \abs{\lambda_f(n)}$ where $f$ is a normalized Maass eigencusp form for the full modular group, $\lambda_f(n)$ is the $n$th normalized Fourier coefficient of $f$ and $S$ is a proper subset of positive integers in $[1,x]$ with certain properties.
\end{abstract}

\subjclass[2020]{11F30, 11L20, 11L26, 11N25}

\keywords{Maass eigencusp form, Sums of Fourier coefficients}

\maketitle

\begin{section}{Introduction} \hfill

\noindent
One of the important questions for several classical non-negative arithmetical functions is to find how often they are small on average. In this connection, assuming Ramanujan conjecture namely
$$ \abs{\alpha_f(p)} = \abs{\beta_f(p)} = 1 \quad \forall \: \text{primes} \: p$$
when $f$ is a normalized Maass eigencusp form,
H. Tang and J. Wu in \cite{HtJw} showed that
\begin{equation} \label{e1}
\sum_{n \leq x} \abs{\lambda_f(n)} \ll_f \frac{x}{(\log x)^{\theta_1}} 
\end{equation}
where $\theta_1 = 0.118 \dots$ and $\lambda_f(n)$ is the $n$th normalized Fourier coefficient of $f$.
They have also obtained results on short intervals. \\

\noindent
Assuming Sato-Tate conjecture for Hecke eigencusp forms (it is proved for holomorphic case by Barnet-Lamb, Geraghty, Harris, \& Taylor in \cite{TbDg}, but is still open for Maass cusp forms), it is also established that (see \cite{HtJw}),
$$ \sum_{n \leq x} \abs{\lambda_f(n^m)} \sim D_m(f) x (\log x)^{- \delta_m} $$
where $D_m(f)$ is a positive constant depending on $m$ and $f$ and
$$ \delta_m := 1- \frac{4(m+1)}{\pi m (m+2)} \cot \left( \frac{\pi}{2(m+1)} \right) .$$ \\

\noindent
There are two important questions:

\begin{enumerate}

\item Is it possible to improve the upper bound of the inequality in (\ref{e1}) even assuming Ramanujan conjecture?

\item Is it possible to give non-trivial improved upper bounds for the sum $\sum \limits_{n \in S} \abs{\lambda_f(n)} $ where $S$ is a proper subset of all the integers in the interval $[1,x]$ such that $|S| \to \infty$ as $x \to \infty$? \\

\end{enumerate}

\noindent
Question 1 seems to be very hard, however regarding Question 2, for some proper subsets $S$, non-trivial upper bounds can be obtained unconditionally. If we obtain $\sum \limits_{n \in S} \abs{\lambda_f(n)} = o(|S|)$, it ensures that on average $\abs{\lambda_f(n)}$ is small with respect to the set $S$. Of course we have compromised on the length of the summand and thus naturally one would expect the upper bound of $\sum \limits_{n \in S} \abs{\lambda_f(n)}$ to go down. Indeed what we show here is that for certain proper subsets of all the integers in $[1,x]$, the upper bound of $\sum \limits_{n \in S} \abs{\lambda_f(n)}$ goes down considerably, that too without Ramanujan conjecture. \\

\noindent
An important result of J. Hoffstein and P. Lockhart in \cite{JhPl} shows that
$$ \sum_{n \leq x} \abs{\lambda_f(n)}^4 \ll x \log x $$
and thus on average $\abs{\lambda_f(n)}^4$ behaves nicely. This will lead to 
$$ \sum_{n \in S} \abs{\lambda_f(n)} \ll_f |S|^{3/4} (x \log x)^{1/4} .$$

\noindent
The question of finding a tight upper bound for the sum $\sum \limits_{n \in S} \abs{\lambda_f(n)}$ becomes more interesting. Thus the main task of this article is to prove:  \\

\begin{theorem} \label{t1}
For a normalized Maass eigencusp form $f$ and $x \geq x_0$ with $x_0$ sufficiently large, the estimate
$$ \sum_{n\leq x} \abs{\lambda_f(n)\mu(n)} \ll_{f,\epsilon} \frac{x(\log \log x)^{5/4}}{\sqrt{\log x}}$$ 
holds unconditionally. \\
\end{theorem}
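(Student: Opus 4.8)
The plan is first to remove the M\"{o}bius factor: since $\abs{\lambda_f(n)\mu(n)}=\abs{\lambda_f(n)}$ when $n$ is squarefree and vanishes otherwise, the sum equals $\sum_{n\le x,\,\mu^2(n)=1}\abs{\lambda_f(n)}$. Because $f$ is a Hecke eigenform, $\lambda_f$ is multiplicative, so on a squarefree $n=p_1\cdots p_k$ we have $\abs{\lambda_f(n)}=\prod_{i}\abs{\lambda_f(p_i)}$, and the natural object to study is the Dirichlet series $D(s)=\sum_{\mu^2(n)=1}\abs{\lambda_f(n)}n^{-s}=\prod_p\bigl(1+\abs{\lambda_f(p)}p^{-s}\bigr)$, whose analytic behavior at $s=1$ governs the partial sums through a Perron/Selberg--Delange extraction. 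The guiding idea is that $\abs{\lambda_f(n)}=\sqrt{\lambda_f(n)^2}$ together with the Rankin--Selberg estimate $\sum_{n\le x}\lambda_f(n)^2\ll_f x$ --- equivalently, $\sum_n\lambda_f(n)^2 n^{-s}$ has only a simple pole at $s=1$, coming from $\zeta(s)$ while $L(s,\mathrm{sym}^2 f)$ is holomorphic and nonzero there --- should force $D(s)$ to carry a singularity of order $\tfrac12$ at $s=1$, and a half-order singularity is precisely what yields the saving $(\log x)^{-1/2}$.

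To implement this I would work with $\log D(s)=\sum_p\abs{\lambda_f(p)}p^{-s}+O(1)$ and control the prime sum $\sum_{p}\abs{\lambda_f(p)}p^{-s}$ as $s\to1^+$. The key inequality is Cauchy--Schwarz against the mean square: using $\sum_{p\le y}\lambda_f(p)^2/p\sim\log\log y$ (again Rankin--Selberg) one compares $\abs{\lambda_f(p)}$ with $\lambda_f(p)^2$ prime by prime, aiming to show that the growth of $\log D(s)$ is $\tfrac12\log\frac1{s-1}+O(\log\log\frac1{s-1})$, whence $D(s)\ll(s-1)^{-1/2}$ up to lower-order factors. Feeding this into the Landau--Selberg--Delange machinery (a truncated Perron formula with a contour hugging the cut at $s=1$, the local behavior being modeled on $\zeta(s)^{1/2}$ via the $\Gamma(\tfrac12)$ normalization) produces the main term $x(\log x)^{-1/2}$; the secondary factor $(\log\log x)^{5/4}$ then emerges from careful bookkeeping of the correction terms --- the convergent Euler product $\prod_p(1+\abs{\lambda_f(p)}p^{-s})(1-p^{-s})^{1/2}$ evaluated near $s=1$ and the error in the prime mean value --- together with the $\Gamma$-factor arithmetic of the Selberg--Delange expansion.

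Because we do not assume Ramanujan, an extra step is required to handle primes at which $\abs{\lambda_f(p)}$ is large: here I would invoke the Kim--Sarnak bound $\abs{\lambda_f(p)}\ll p^{7/64}$ to guarantee absolute convergence of all the relevant Euler products for $\Re s>1$ and to bound the contribution of the exceptional primes, while the fourth-moment estimate $\sum_{n\le x}\abs{\lambda_f(n)}^4\ll x\log x$ of Hoffstein--Lockhart controls the tail where $\abs{\lambda_f(p)}$ deviates substantially from its typical size. I expect the main obstacle to be exactly the central estimate of the second paragraph: rigorously showing that the effective density of $\abs{\lambda_f(p)}$ driving the singularity of $D(s)$ is $\tfrac12$ rather than the larger value suggested by a naive mean-value heuristic, and doing so unconditionally. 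This is the delicate heart of the argument, since it is precisely the passage from the second moment to the first --- the gap between $\mathbb{E}\sqrt{\lambda_f(p)^2}$ and $\sqrt{\mathbb{E}\lambda_f(p)^2}$ --- that must be quantified, and it is from this analysis that both the exponent $\tfrac12$ and the power $\tfrac54$ of $\log\log x$ ultimately arise.
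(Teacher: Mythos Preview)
Your proposal has a genuine gap at its central step. You want to show that $D(s)=\prod_p(1+|\lambda_f(p)|p^{-s})$ has a singularity of order $\tfrac12$ at $s=1$, but the Cauchy--Schwarz comparison you invoke cannot produce this. From Rankin--Selberg one has $\sum_{p\le y}\lambda_f(p)^2/p\sim\log\log y$, and Cauchy--Schwarz against $\sum_{p\le y}1/p\sim\log\log y$ gives only
\[
\sum_{p\le y}\frac{|\lambda_f(p)|}{p}\le\Bigl(\sum_{p\le y}\frac{\lambda_f(p)^2}{p}\Bigr)^{1/2}\Bigl(\sum_{p\le y}\frac{1}{p}\Bigr)^{1/2}\sim\log\log y,
\]
i.e.\ singularity order $\le 1$, not $\le\tfrac12$. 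In fact the order of the singularity equals the prime mean value of $|\lambda_f(p)|$, which under Sato--Tate is $8/(3\pi)\approx 0.849$ and is \emph{not known} unconditionally for Maass forms; this is exactly why a Selberg--Delange extraction cannot deliver the exponent $\tfrac12$ here. No combination of the second and fourth moments forces the first absolute moment over primes down to $\tfrac12\log\log y$, so the step ``aiming to show that the growth of $\log D(s)$ is $\tfrac12\log\tfrac1{s-1}+O(\dots)$'' fails.

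The paper's $\tfrac12$ has a completely different origin. The argument is an elementary Shiu--type inequality of the form
\[
\sum_{n\le x}g(n)\;\ll\;\bigl(h_1(x)+1\bigr)\,\frac{x}{\log x}\sum_{n\le x}\frac{g(n)}{n},
\]
valid for nonnegative multiplicative $g$ once $\sum_{p\le x}g(p)\log p\le A\,x\,h_1(x)$. For $g=|\lambda_f\mu|$ one obtains $h_1(x)=\sqrt{\log x}$ by splitting at a threshold $L$ and bounding the large values via Hoffstein--Lockhart's fourth moment, which after optimising $L\sim(\log x)^{1/2}$ gives $\sum_{p\le x}|\lambda_f(p)|\log p\ll x\sqrt{\log x}$; this is the source of the factor $x/\sqrt{\log x}$. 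The logarithmic sum $\sum_{n\le x}|\lambda_f(n)\mu(n)|/n$ is then shown to be $O\bigl((\log\log x)^{5/4}\bigr)$: for $n\le(\log x)^2$ one applies H\"older with exponents $(4,4/3)$ against the fourth moment, producing $(\log\log x)^{1/2}\cdot(\log\log x)^{3/4}$, and the range $n>(\log x)^2$ is handled by a smooth/rough factorisation $n=m_1m_2$ together with the Kim--Sarnak bound, contributing a negligible power of $x$. Thus both the $\tfrac12$ and the $\tfrac54$ come from balancing against the fourth moment at two separate places, not from the polar structure of $D(s)$.
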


\begin{remark}
\normalfont
We note that 
$$ \sum_{n \leq x} \abs{\mu(n)} = \frac{x}{\zeta(2)} + O(x^{1/2}) .$$
From Theorem \ref{t1}, we observe that
$$ \frac{\sum \limits_{n \leq x} \abs{\lambda_f(n)\mu(n)}}{\sum \limits_{n \leq x} \abs{\mu(n)}} \ll_{f,\epsilon} \frac{(\log \log x)^{5/4}}{\sqrt{\log x}} $$
which tends to zero as $x \to \infty$ so that the relative density is zero. \\
\end{remark}

\begin{theorem} \label{t2}
Let $S_k$ be the set of all $k$-free integers ($k \geq 3$) in the interval $[1,x]$, then the inequality
$$ \sum_{n \in S_k} \abs{\lambda_f(n)} \ll_{f,k,\epsilon} \frac{x(\log \log x)^{5/4}}{\sqrt{\log x}} = o \left( |S_k| \right)$$
holds unconditionally. \\
\end{theorem}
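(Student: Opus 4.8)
The plan is to reduce the $k$-free sum to the squarefree sum already controlled by Theorem~\ref{t1}, using the multiplicativity of $\lambda_f$ together with the sparsity of powerful numbers. First I would invoke the canonical factorization: every positive integer $n$ can be written uniquely as $n=ab$ with $a$ squarefree, $b$ powerful (squarefull), and $\gcd(a,b)=1$, where $a=\prod_{v_p(n)=1}p$ and $b=\prod_{v_p(n)\geq 2}p^{v_p(n)}$. If $n$ is $k$-free then so is its powerful part $b$. Since $\lambda_f$ is multiplicative and $\gcd(a,b)=1$, we have $\abs{\lambda_f(n)}=\abs{\lambda_f(a)}\,\abs{\lambda_f(b)}$, so that writing $T(y):=\sum_{n\leq y}\abs{\lambda_f(n)\mu(n)}$ for the quantity estimated in Theorem~\ref{t1},
\[
\sum_{n\in S_k}\abs{\lambda_f(n)}=\sum_{\substack{b\leq x\\ b\ \text{powerful},\ k\text{-free}}}\abs{\lambda_f(b)}\sum_{\substack{a\leq x/b\\ a\ \text{squarefree}\\ (a,b)=1}}\abs{\lambda_f(a)}\ \leq\ \sum_{\substack{b\leq x\\ b\ \text{powerful}}}\abs{\lambda_f(b)}\,T(x/b),
\]
where the inequality drops both the coprimality of $a,b$ and the $k$-freeness of $b$.

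Next I would split the outer sum at $b\leq\sqrt{x}$ and $b>\sqrt{x}$. In the main range $b\leq\sqrt{x}$ one has $x/b\geq\sqrt{x}$, hence $\log(x/b)\geq\tfrac12\log x$ and $\log\log(x/b)\leq\log\log x$, so Theorem~\ref{t1} yields $T(x/b)\ll (x/b)\,(\log\log x)^{5/4}/\sqrt{\log x}$ uniformly; factoring this out leaves
\[
\frac{x(\log\log x)^{5/4}}{\sqrt{\log x}}\sum_{b\ \text{powerful}}\frac{\abs{\lambda_f(b)}}{b}.
\]
The crux is therefore to show that $\sum_{b\ \text{powerful}}\abs{\lambda_f(b)}/b$ converges. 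For this I would use the unconditional Kim--Sarnak bound $\abs{\lambda_f(n)}\ll_\epsilon n^{7/64+\epsilon}$ together with the classical count $\#\{b\leq t:\ b\ \text{powerful}\}\ll\sqrt{t}$: partial summation then bounds the series by $\sum_{b\ \text{powerful}}b^{-(57/64-\epsilon)}$, which converges since $57/64-\epsilon>1/2$ for small $\epsilon$, giving a finite constant $C_{f}$. For the tail $b>\sqrt{x}$ I would replace Theorem~\ref{t1} by the trivial bound $T(y)\ll_f y$ (Cauchy--Schwarz together with the Rankin--Selberg estimate $\sum_{n\leq y}\abs{\lambda_f(n)}^2\ll_f y$), so that this part contributes $\ll_f x\sum_{b>\sqrt{x},\ \text{powerful}}\abs{\lambda_f(b)}/b\ll_{f,\epsilon}x^{1-c}$ for some $c>0$ by the same partial summation, which is negligible. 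Combining the two ranges gives the asserted bound $\sum_{n\in S_k}\abs{\lambda_f(n)}\ll_{f,k,\epsilon}x(\log\log x)^{5/4}/\sqrt{\log x}$ (indeed the argument is even uniform in $k$, since restricting $b$ to $k$-free powerful numbers only shrinks the series).

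Finally, for the equality with $o(|S_k|)$ I would recall the standard density estimate $|S_k|=x/\zeta(k)+O(x^{1/k})\asymp_k x$, so the displayed upper bound is $o(x)=o(|S_k|)$.

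The step I expect to be the main obstacle is precisely the convergence of $\sum_{b\ \text{powerful}}\abs{\lambda_f(b)}/b$: this is where a pointwise bound on $\lambda_f$ with exponent strictly below $1/2$ is indispensable, since the trivial bound $\abs{\lambda_f(n)}\ll n^{1/2}$ would make the powerful-number series diverge (logarithmically), and one would lose more than the constant factor $C_f$. Everything else amounts to bookkeeping around the uniformity of the $\log$ and $\log\log$ factors in the main range and the smallness of the tail.
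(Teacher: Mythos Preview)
Your argument is correct, but it is genuinely different from the paper's. The paper does not reduce Theorem~\ref{t2} to Theorem~\ref{t1}; instead it reruns the entire proof of Theorem~\ref{t1} with $\abs{\mu(n)}$ replaced by the $k$-free indicator $h_k(n)$. Concretely, it bounds $\sum_{n\leq x}\abs{\lambda_f(n)}h_k(n)/n$ by the same smooth/rough splitting at $\mathcal{L}=(\log x)^2$ (analogues of Lemmas~\ref{l2} and~\ref{l5}), and then applies Lemma~\ref{l6} directly to $g(n)=\abs{\lambda_f(n)}h_k(n)$, verifying the second hypothesis $\sum_p\sum_{\alpha\geq2}g(p^{\alpha})(\log p^{\alpha})/p^{\alpha}<\infty$ via Kim--Sarnak. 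Your route is more economical: you consume Theorem~\ref{t1} as a black box and push all the new work into the single convergent series $\sum_{b\ \text{powerful}}\abs{\lambda_f(b)}/b$, which is exactly where you invoke Kim--Sarnak. So the essential arithmetic input ($7/64<1/2$) is the same in both proofs, just deployed at a different place. A small dividend of your approach is that it makes the uniformity in $k$ transparent, since the only appearance of $k$ is the dropped restriction that $b$ be $k$-free; the paper carries a nominal $k$-dependence through its estimates even though its bounds are in fact uniform as well.
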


\begin{remark} \label{r2}
\normalfont
It is not difficult to see from the arguments of the paper that the proof goes through very well for any proper subset $S \subset \{ 1,2, \dots, [x] \}$ with the cardinality $|S|$ satisfying
$$ \frac{x}{(\log x)^{1/2 - \eta}} \leq |S| \leq (1- \eta) x $$
($\eta$ is any small positive constant) so that $\abs{\lambda_f(n)}$ is small on average with respect to $S$. However, there are some important proper subsets for which these arguments do not provide the desired results. We exhibit two such examples in the last section of this paper. \\
\end{remark}

\subsection*{Main Idea :} \hfill\\

\noindent
First we study the cognated sum $\sum \limits_{n \in S} \frac{\abs{\lambda_f(n)}}{n}$ by splitting it into two sums pertaining to $\mathcal{L}$-smooth and its compliment to get non-trivial upper bounds. Then by the Lemma \ref{l6}, we pass onto $\sum \limits_{n \in S} \abs{\lambda_f(n)}$. The whole point here is that we can avoid the Ramanujan conjecture in these situations we consider. We treat the squarefree set case in detail and give the sketch of the proof in the general $k$-free set case. \\

\subsection*{Relation between sums $\sum \limits_{n \leq x} g(n)$ and $\sum \limits_{n \leq x} \frac{g(n)}{n}$ :} \hfill\\

\noindent
Let $g(n)$ be a real non-negative arithmetic function. We are interested in the size of the sums
$$ S(x) = \sum_{n \leq x} g(n) \quad \text{and} \quad L(x) = \sum_{n \leq x} \frac{g(n)}{n}. $$
Trivially, $S(x) \leq x L(x)$ and Riemann Stieltjes integration gives the relation
$$ L(x) = \int_1^x \frac{d S(u)}{u} \leq \frac{S(x)}{x} - \frac{S(1^+)}{1^+} + \int_{1^+}^x \frac{\abs{S(u)}}{u^2} du .$$ \\

\noindent
In 1980, Shiu \cite{Ps} obtained a general upper bound for short sums of functions satisfying certain properties: \\ 
Let $\alpha, \beta \in [0,1]$ and let $x, y$ satisfy $x \geq y \geq x^{\alpha}$. Then for positive integers $a, q$ with $(a,q)=1$ we have
$$ \sum_{x < n \leq x+y \atop n \equiv a (mod q)} g_1(n) \ll \frac{y}{\phi(q) \log x} \exp \bigg\{ \sum_{p \leq x \atop p \nmid q} \frac{g_1(p)}{p} \bigg\}  $$
uniformly for $1 \leq q \leq x^{\beta}$. \\

\noindent
Later in 1998, Nair and Tenenbaum in \cite{MnGt} gave an interesting inequality connecting the two sums $S(x)$ and $L(x)$ for a class of non-negative arithmetic functions satisfying some conditions. \\

\noindent
Let $F(n)$ be a non-negative arithmetic function such that
$$ F(mn) \leq \min \left( D^{\omega(m)}, Em^{\epsilon} \right) F(n) $$
for all $m,n$ with $(m,n)=1$ and any $D \geq 1$, $E \geq 1$. Here $\omega(m)$ denotes the total number of prime factors of $m$, counted with multiplicity. Suppose $Q \in \mathbb{Z}[X]$ is an irreducible polynomial and $\rho(m) = \rho_Q(m)$ denotes the number of roots of $Q$ in $\mathbb{Z}/m\mathbb{Z}$.

\noindent
Then, a special case of their result gives
\begin{equation} \label{e2}
\sum_{x < n \leq x+y} F \left( \abs{Q(n)} \right) \ll y \prod_{p \leq x} \left( 1-\frac{\rho(p)}{p} \right) \sum_{n \leq x} \frac{F(n) \rho(n)}{n}
\end{equation} 
uniformly for $x^{\alpha} \leq y \leq x$ with $x$ sufficiently large and where $\epsilon$ and $\alpha$ can be arbitrary small positive real numbers satisfying certain conditions. \\

\noindent
For some simplified result of the form (\ref{e2}), we refer to Lemma 9.6 of De Koninck and Luca in \cite{JdFl}. \\

\end{section}

\begin{section}{Preliminaries and Notations} \hfill

\noindent
Let $n \geq 2$, and let $v=(v_1,v_2,\dots,v_{n-1}) \in \mathbb{C}^{n-1}$. A Maass form (see \cite{Dg}) for $SL(n,\mathbb{Z})$ of type $v$ is a smooth function $f \in \mathcal{L}^2(SL(n,\mathbb{Z}) \backslash \mathcal{H}^n)$ which satisfies
\begin{enumerate}
\item $f(\gamma z) = f(z)$, for all $\gamma \in SL(n,\mathbb{Z}), z \in \mathcal{H}^n$,
\item $Df(z) = \lambda_D f(z)$, for all $D \in \mathcal{D}^n$,
\item $\int \limits_{(SL(n,\mathbb{Z}) \cap U) \backslash U} f(uz) du = 0, $\\
for all upper triangular groups $U$ of the form 
$$U = \left\{
\begin{pmatrix} 
I_{r_1} &             &           &  \\
           & I_{r_2}  &           & *\\ 
           &             & \ddots &   \\ 
           &             &           & I_{r_b}  \\ 
\end{pmatrix} \right\}, $$
with $r_1+r_2+\cdots+r_b=n$. Here, $I_r$ denotes the $r \times r$ identity matrix, and $*$ denotes arbitrary real entries. \\

\end{enumerate}

\noindent
Let $M^*(\Gamma)$ be the set of normalized Maass eigencusp forms for the full modular group $\Gamma=SL(2,\mathbb{Z})$ and $f \in M^*(\Gamma)$. \\

\noindent
Denote by $\lambda_f(n)$ the $n$th normalized Fourier coefficient of $f$ and also the eigenvalue of $f$ under the Hecke operator $T_n$. \\

\noindent
From the Hecke theory, $\lambda_f(n)$ satisfies the multiplicative relation
$$ \lambda_f(m) \lambda_f(n) = \sum_{d|(m,n)} \lambda_f \left( \frac{mn}{d^2} \right) $$
for all integers $m \geq 1$ and $n \geq 1$. \\

\noindent
Thus for each prime number $p$ there are two complex numbers $\alpha_f(p)$ and $\beta_f(p)$ such that
$$ \alpha_f(p) \beta_f(p) =1 $$
and
$$ \lambda_f(p^{\nu}) = \alpha_f(p)^{\nu} + \alpha_f(p)^{\nu-1} \beta_f(p) + \cdots + \beta_f(p)^{\nu} $$
for all integers $\nu \geq 1$. \\

\noindent
Ramanujan conjecture states that
$$ \abs{\alpha_f(p)} = \abs{\beta_f(p)} =1 $$
for all primes $p$. \\

\noindent
Unconditionally, we only have
$$ p^{-7/64} \leq \abs{\alpha_f(p)} \leq p^{7/64} $$
$$ p^{-7/64} \leq \abs{\beta_f(p)} \leq p^{7/64} $$
for all primes $p$, due to Kim and Sarnak \cite{Hk}. \\

\noindent
We use the following standard notations:

\begin{enumerate}

\item We write $\log_1 x := \log x$ and for $k \geq 2$, $\log_k x := \log \log_{k-1} x $.

\item We denote the largest prime factor of an integer $n$ by $P^+(n)$ and the smallest prime factor of an integer $n$ by $P^-(n)$ (with the convention that $P^+(1)=P^-(1)=0$). 

\item Let $\mathcal{L} := \mathcal{L}(x)=(\log x)^2$ where $x$ is sufficiently large.

\item Constants $C$ with indices are some fixed positive constants. $\epsilon$ is any small positive constant.  

\item Implied constants might depend atmost on $f$ and $\epsilon$ in the squarefree set case and on $f,\ \epsilon$ and $k$ in the $k$-free set case.

\end{enumerate}

\end{section}

\begin{section}{Some Lemmas} \hfill

\begin{lemma} \label{l1}
Let $x$ be sufficiently large and $\frac{1}{2}<a<1$. Then there exists a fixed constant $C_1 > 0$ such that
$$\sum_{n \leq x \atop P^+(n)\leq \mathcal{L}} n^{-a} \ll x^{1/2-a}\exp \left( C_1 \frac{\log x}{\log_2 x} \right). $$ \\
\end{lemma}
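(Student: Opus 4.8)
The plan is to pass from the $\mathcal{L}$-smooth sum to an Euler product over the primes $p \le \mathcal{L}$ by Rankin's trick, and then to estimate that product via the prime number theorem. For a shift parameter $\delta \ge 0$, inserting the factor $(x/n)^{\delta} \ge 1$ (legitimate since $n \le x$) and then completing the sum so that $n$ ranges over all $\mathcal{L}$-smooth integers gives
\[
\sum_{n \le x,\ P^+(n) \le \mathcal{L}} n^{-a} \le x^{\delta} \sum_{P^+(n) \le \mathcal{L}} n^{-a-\delta} = x^{\delta} \prod_{p \le \mathcal{L}} \left( 1 - p^{-a-\delta} \right)^{-1}.
\]
Here the external power $x^{\delta}$ grows and the Euler product shrinks as the evaluation point $s = a + \delta$ moves right, so the task is to choose $\delta$ balancing the two. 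To reach the shape claimed in the statement I would drive the Dirichlet series down to the central point $s = \tfrac12$, i.e.\ take $a + \delta = \tfrac12$; the external factor then becomes exactly $x^{1/2 - a}$ and the product becomes $\prod_{p \le \mathcal{L}} (1 - p^{-1/2})^{-1}$.

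The heart of the estimate is this last product. Taking logarithms,
\[
\log \prod_{p \le \mathcal{L}} \left( 1 - p^{-1/2} \right)^{-1} = \sum_{p \le \mathcal{L}} p^{-1/2} + O\Big( \sum_{p \le \mathcal{L}} p^{-1} \Big),
\]
and I would evaluate the main sum by partial summation against $\pi(t)$, obtaining $\sum_{p \le \mathcal{L}} p^{-1/2} \sim 2\sqrt{\mathcal{L}} / \log \mathcal{L}$. Since $\mathcal{L} = (\log x)^2$ we have $\sqrt{\mathcal{L}} = \log x$ and $\log \mathcal{L} = 2\log_2 x$, so this main term is asymptotic to $\log x / \log_2 x$, while the error $\sum_{p \le \mathcal{L}} p^{-1} \sim \log_2 \mathcal{L} \sim \log_3 x$ is negligible. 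Hence the product is $\exp\left( (1 + o(1)) \log x / \log_2 x \right) \le \exp\left( C_1 \log x / \log_2 x \right)$ for any fixed $C_1 > 1$ and $x$ large, and multiplying by $x^{1/2 - a}$ yields the asserted bound.

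The step I expect to be delicate --- and where the hypothesis $\tfrac12 < a < 1$ together with the smallness of $\mathcal{L}$ relative to $x$ really enters --- is the passage to the edge value $s = \tfrac12$: one must justify that this choice of Rankin shift is admissible and that the resulting majorization is uniform in the stated range, rather than merely optimizing $\delta$ over the obviously valid range $\delta \ge 0$. By comparison, the prime number theorem input and the convergence of the tail contributions $\sum_p p^{-1}, \sum_p p^{-3/2}, \dots$ are routine. Once the product estimate $\prod_{p \le \mathcal{L}}(1 - p^{-1/2})^{-1} \ll \exp(C_1 \log x / \log_2 x)$ is in hand, the conclusion is immediate.
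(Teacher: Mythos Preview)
Your Rankin step does not go through. You begin with the (correct) observation that for $\delta \ge 0$ one has
\[
\sum_{\substack{n\le x\\ P^+(n)\le \mathcal L}} n^{-a}
\;\le\; x^{\delta}\prod_{p\le \mathcal L}\bigl(1-p^{-a-\delta}\bigr)^{-1},
\]
and then set $a+\delta=\tfrac12$. But the hypothesis is $a>\tfrac12$, so this forces $\delta=\tfrac12-a<0$, and for negative $\delta$ the factor $(x/n)^{\delta}$ is $\le 1$ on the range $n\le x$, not $\ge 1$. The displayed inequality is therefore false in exactly the regime you need: already with a single prime $p\le\mathcal L$ the right-hand side tends to $0$ with $x$ while the left-hand side stays bounded below by $1$. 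You flag this as ``delicate,'' but it is not a matter of care in justification --- the inequality at $s=\tfrac12$ simply does not hold, and no amount of uniformity discussion will repair it. Your estimate of the Euler product $\prod_{p\le\mathcal L}(1-p^{-1/2})^{-1}=\exp\bigl((1+o(1))\log x/\log_2 x\bigr)$ is fine; it is the passage to $s=\tfrac12$ that is missing.

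The paper does not use Rankin at all. It inputs the quantitative smooth-number estimate $\psi(x,y)\le x\exp\bigl(-u(\log u+\log_2 u-1)\bigr)$ for $u=\log x/\log y$ (via Dickman's $\rho$ and Norton's asymptotic), applies partial summation to write the sum as $\psi(x,\mathcal L)x^{-a}+a\int_1^x\psi(w,\mathcal L)w^{-1-a}\,dw$, and then decomposes the integral into pieces $[xe^{-(k+1)},xe^{-k})$; on each piece the saving $x^{-1/2}$ is extracted from the $\rho$-estimate, and the geometric sum over $k$ converges because $a<1$. If you want to salvage a Rankin-style argument, the legal move is to apply Rankin to the counting function, $\psi(t,\mathcal L)\le t^{1/2}\prod_{p\le\mathcal L}(1-p^{-1/2})^{-1}$, and then do partial summation in $t$; note, however, that the integral $\int_1^x t^{-a-1/2}\,dt$ is bounded (since $a>\tfrac12$), so the contribution from small $t$ produces an additive term of size $\asymp_a \prod_{p\le\mathcal L}(1-p^{-1/2})^{-1}$ that does \emph{not} carry the factor $x^{1/2-a}$.
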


\begin{proof}

Let $\psi(x,y) = \sum \limits_{n \leq x \atop P^+(n) \leq y} 1$ and $y \geq \mathcal{L} = (\log x)^2$. Then for $u=\frac{\log x}{\log y}$,

$$ \psi(x,y) = x \rho (u) \exp \left( O \left( u \exp \left( -(\log u)^{3/5-o(1)} \right) \right) \right) \quad \text{if} \quad y \geq (\log x)^{1+\epsilon} $$
where $\rho$ is Dickmann's function (see \cite{Nd,Kd}) defined by
$$ \rho(u) =1 \quad \text{for} \quad 0 \leq u \leq 1 $$
$$ \rho'(u) = -\frac{1}{u} \rho(u-1) \quad \text{for} \quad u>1 .$$

\noindent
Norton (see \cite{Kn}) has shown that (as $u \to \infty$)

$$ \rho(u) = \exp \left( -u \left( \log u + \log_2 u -1 + \frac{\log_2 u}{\log u} + O \left( \frac{1}{\log u} \right) \right) \right) .$$

\noindent
We obtain
$$ \psi(x,y) \leq x \exp \left( -u \left( \log u + \log_2 u -1 \right) \right) \quad \text{for} \quad y \geq (\log x)^{1+\epsilon} . $$ \\

\noindent
Using integration by parts (following arguments as in \cite{HmAs}), we obtain

\begin{align*}
S_1 &:= \sum_{n\leq x \atop P^+(n)\leq \mathcal{L}} n^{-a}\\
&= \psi(x,\mathcal{L}) x^{-a} + a \int_1^x \psi(w,\mathcal{L}) w^{-1-a} \ dw\\
&\ll 1+\int_1^x w^{-a} \exp \bigg( -\bigg(\frac{\log w}{\log \mathcal{L}}\bigg)\bigg\{\log\bigg(\frac{\log w}{\log \mathcal{L}}\bigg) \\
&\qquad+\log_2\bigg(\frac{\log w}{\log\mathcal{L}}\bigg)-1 \bigg\}\bigg) \ dw.
\end{align*}
Partition the interval of integration into subintervals of the form $[x e^{-(k+1)},\ xe^{-k})$ for $0\leq k\leq \log x$, then
\begin{align*}
J_k &= \int_{xe^{-(k+1)}}^{xe^{-k}} \psi(w,\mathcal{L}) w^{-1-a}\ dw \\
&\ll \big( xe^{-k} \big)^{1-a} \exp \bigg( -\bigg( \frac{\log x-(k+1)}{2\log_2 x} \bigg) \bigg\{ \log \bigg( \frac{\log x-(k+1)}{2\log_2 x} \bigg) \\
&\qquad+\log_2 \bigg( \frac{\log x-(k+1)}{2\log_2 x} \bigg) -1 \bigg\} \bigg) \\
&\ll x^{1-a} e^{-k(1-a)} \exp \bigg(-\bigg( \frac{\log x-(k+1)}{2\log_2 x} \bigg) \bigg\{ \log \bigg( \frac{\log x-(k+1)}{2\log_2 x} \bigg) \\
&\qquad + \log_2 \bigg( \frac{\log x-(k+1)}{2\log_2 x} \bigg)-1 \bigg\} \bigg) \\
&\ll x^{1-a} e^{-k(1-a)} x^{-1/2} \exp \bigg( \frac{\log 2}{2}\frac{\log x}{\log_2x} \bigg) \\
&=x^{1/2-a}e^{-k(1-a)} \exp \bigg( \frac{\log2}{2} \frac{\log x}{\log_2x} \bigg).
\end{align*}

\noindent
Now summing over $k$, we get 
$$ S_1 = \sum_{n\leq x \atop P^+(n)\leq \mathcal{L}} n^{-a} \ll x^{1/2-a} \exp \bigg( C_1 \frac{\log x}{\log_2 x} \bigg). $$ 

\end{proof}
\hfill

\begin{lemma} \label{l2}
For $x$ sufficiently large, we have
$$ \sum_{n\leq x \atop P^+(n) \leq \mathcal{L}} \frac{\abs{\lambda_f(n) \mu(n)}}{n} \ll_\epsilon x^{-\frac{25}{64}+\epsilon} \exp \bigg( C_1 \frac{\log x}{\log_2 x} \bigg). $$ \\
\end{lemma}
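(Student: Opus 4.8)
The plan is to discard the M\"obius factor, estimate $\abs{\lambda_f(n)}$ on squarefree smooth integers by means of the Kim--Sarnak bound, and then feed the resulting Dirichlet-type sum into Lemma \ref{l1}.

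First I would note that $\abs{\mu(n)} \leq 1$ and that $\mu(n)$ vanishes unless $n$ is squarefree, so that
$$ \sum_{n\leq x \atop P^+(n) \leq \mathcal{L}} \frac{\abs{\lambda_f(n)\mu(n)}}{n} \;\leq\; \sum_{n\leq x \atop P^+(n)\leq \mathcal{L},\ n \text{ squarefree}} \frac{\abs{\lambda_f(n)}}{n}. $$
For squarefree $n$ the Hecke multiplicativity gives $\lambda_f(n) = \prod_{p \mid n} \lambda_f(p)$, and since $\lambda_f(p) = \alpha_f(p) + \beta_f(p)$ with $\abs{\alpha_f(p)}, \abs{\beta_f(p)} \leq p^{7/64}$ by Kim and Sarnak, one gets $\abs{\lambda_f(p)} \leq 2 p^{7/64}$ and hence $\abs{\lambda_f(n)} \leq 2^{\omega(n)} n^{7/64}$.

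Next I would absorb the factor $2^{\omega(n)}$ using the elementary divisor bound $2^{\omega(n)} \leq d(n) \ll_\epsilon n^{\epsilon}$, valid for every $n$. This yields $\frac{\abs{\lambda_f(n)}}{n} \ll_\epsilon n^{-a}$ with $a := \frac{57}{64} - \epsilon$ (after relabelling $\epsilon$), and crucially $a > \frac12$ once $\epsilon$ is small. Therefore the sum is majorized by $\sum_{n \leq x,\ P^+(n) \leq \mathcal{L}} n^{-a}$, to which Lemma \ref{l1} applies directly, giving a bound $\ll x^{1/2 - a} \exp\!\big(C_1 \frac{\log x}{\log_2 x}\big)$. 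Finally, substituting $\tfrac12 - a = -\tfrac{25}{64} + \epsilon$ delivers the claimed estimate.

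The genuinely difficult analytic input---the smooth-number estimate via Dickman's function and Norton's asymptotic for $\rho$---is already packaged in Lemma \ref{l1}. Consequently the only points requiring care here are the verification that the effective exponent $a = \frac{57}{64} - \epsilon$ exceeds $\frac12$ (so that Lemma \ref{l1} is applicable) and the harmless rescaling of $\epsilon$ when passing from $2^{\omega(n)}$ to $n^{\epsilon}$; no Ramanujan-type hypothesis is needed, only the unconditional Kim--Sarnak bound.
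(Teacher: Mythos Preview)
Your proof is correct and follows essentially the same route as the paper: bound $\abs{\lambda_f(n)}$ by $d(n)\,n^{7/64}\ll_\epsilon n^{7/64+\epsilon}$ via Kim--Sarnak, then apply Lemma~\ref{l1} with $a=\tfrac{57}{64}-\epsilon$. The only cosmetic difference is that the paper drops the $\mu$ immediately and uses the bound $\abs{\lambda_f(n)}\leq d(n)\,n^{7/64}$ valid for all $n$, whereas you first restrict to squarefree $n$ and use $2^{\omega(n)}$; both reduce to the same Dirichlet sum.
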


\begin{proof}

\noindent
Using the unconditional bound for $\abs{\lambda_f(n)}$, we get
\begin{align*}
\sum_{n\leq x \atop P^+(n) \leq \mathcal{L}} \frac{\abs{\lambda_f(n) \mu(n)}}{n}
&\leq \sum_{n\leq x \atop P^+(n) \leq \mathcal{L}} \frac{\abs{\lambda_f(n)}}{n}\\
&\leq \sum_{n\leq x \atop P^+(n) \leq \mathcal{L}} \frac{n^{\frac{7}{64}} d(n)}{n}\\
&\ll_\epsilon \sum_{n\leq x \atop P^+(n) \leq \mathcal{L}} n^{-1+\frac{7}{64}+\epsilon}\\
&= \sum_{n\leq x \atop P^+(n) \leq \mathcal{L}} n^{-(1-\frac{7}{64}-\epsilon)}.
\end{align*}
Taking $a=1-\frac{7}{64}-\epsilon$ in Lemma \ref{l1} (note that $ \frac{1}{2} < a <1$), we get 
\begin{align*}
\sum_{n\leq x \atop P^+(n)\leq\mathcal{L}} \frac{\abs{\lambda_f(n) \mu(n)}}{n} & \ll_\epsilon x^{\frac{1}{2}-1+\frac{7}{64}+\epsilon} \exp \bigg( C_1 \frac{\log x}{\log_2x} \bigg) \\
&\ll_\epsilon x^{-\frac{25}{64}+\epsilon} \exp \bigg( C_1 \frac{\log x}{\log_2x} \bigg). 
\end{align*}
\end{proof}
\hfill

\begin{lemma} \label{l3}
For $x$ sufficiently large, we have
$$ \sum_{n \leq x \atop P^-(n) > \mathcal{L}} \frac{1}{n} \ll \log x. $$ \\
\end{lemma}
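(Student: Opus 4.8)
The plan is to dominate the restricted sum by a full Euler product over the primes in the range $(\mathcal{L}, x]$ and then invoke Mertens' theorem. Every integer $n \leq x$ with $P^-(n) > \mathcal{L}$ has all of its prime factors lying in the interval $(\mathcal{L}, x]$; hence, since every term is nonnegative, I would complete the sum by dropping the constraint $n \leq x$ (a Rankin-type majorization), obtaining
$$ \sum_{n \leq x \atop P^-(n) > \mathcal{L}} \frac{1}{n} \leq \sum_{n \geq 1 \atop p \mid n \Rightarrow \mathcal{L} < p \leq x} \frac{1}{n} = \prod_{\mathcal{L} < p \leq x} \left( 1 - \frac{1}{p} \right)^{-1}, $$
the product running over the finitely many primes in $(\mathcal{L}, x]$. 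The factorization into the Euler product is exact because the integers counted on the right are precisely the arbitrary products of primes from $(\mathcal{L}, x]$.

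Next I would estimate the Euler product. Discarding the restriction only enlarges it, so
$$ \prod_{\mathcal{L} < p \leq x} \left( 1 - \frac{1}{p} \right)^{-1} \leq \prod_{p \leq x} \left( 1 - \frac{1}{p} \right)^{-1} \ll \log x, $$
where the final bound is Mertens' third theorem, $\prod_{p \leq x}(1-1/p)^{-1} \sim e^{\gamma} \log x$. This already yields the claimed estimate. In fact, keeping the restriction $p > \mathcal{L}$ and applying Mertens at both endpoints gives the sharper asymptotic $\prod_{\mathcal{L} < p \leq x}(1-1/p)^{-1} \sim \tfrac{\log x}{2 \log_2 x}$, since $\log \mathcal{L} = 2 \log_2 x$; but the weaker bound $\ll \log x$ recorded in the lemma is all that the later argument requires.

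There is essentially no serious obstacle here. The only point deserving a word of justification is the completing-the-sum step, which is legitimate precisely because $1/n > 0$, so enlarging the index set of summation can only increase the total. Indeed, one could be cruder still and bound the left-hand side by the entire harmonic sum $\sum_{n \leq x} 1/n = \log x + O(1) \ll \log x$, ignoring the condition $P^-(n) > \mathcal{L}$ altogether; I nonetheless prefer the Euler-product route because it exposes the true order of magnitude $\log x / \log_2 x$ of the quantity and makes clear what is being thrown away in the final estimate.
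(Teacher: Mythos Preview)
Your proof is correct. The paper, however, takes precisely the ``cruder'' route you mention in passing: it simply drops the condition $P^-(n)>\mathcal{L}$ and bounds the sum by the full harmonic sum $\sum_{n\le x}1/n\le 1+\log x\ll\log x$. Your Euler-product argument via Mertens is more informative (it reveals the true size $\asymp \log x/\log_2 x$), but since only the bound $\ll\log x$ is ever used downstream, the paper opts for the one-line trivial estimate.
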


\begin{proof}
Trivially,
\begin{align*}
\sum_{n \leq x \atop P^-(n) > \mathcal{L}} \frac{1}{n} 
&\leq \sum_{n \leq x} \frac{1}{n} \\
&\leq 1 + \int_1^x \frac{du}{u} \\
&\leq 1 + \log x \\
&\ll \log x.
\end{align*}
\end{proof}
\hfill

\begin{lemma} \label{l4}
We have
$$ \sum_{n\leq x \atop P^-(n)>\mathcal{L}} \frac{\abs{\lambda_f(n)}^4}{n} \ll \log^2 x. $$ \\
\end{lemma}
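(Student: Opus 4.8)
The plan is to discard the smoothness restriction entirely and reduce to a clean partial-summation argument built on the Hoffstein--Lockhart fourth-moment estimate. First I would note that every summand is non-negative, so enlarging the range of summation can only increase the total; hence
$$ \sum_{n \leq x \atop P^-(n) > \mathcal{L}} \frac{\abs{\lambda_f(n)}^4}{n} \leq \sum_{n \leq x} \frac{\abs{\lambda_f(n)}^4}{n} =: L(x), $$
and it suffices to bound $L(x)$. The point is that for the target bound $\ll \log^2 x$ the constraint $P^-(n) > \mathcal{L}$ buys us nothing, so it can be dropped at the outset; the genuinely delicate control of the smooth part is postponed to the places where it is actually needed.

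Next I would bring in the result of Hoffstein and Lockhart recalled in the introduction, namely
$$ S(t) := \sum_{n \leq t} \abs{\lambda_f(n)}^4 \ll t \log t, $$
valid uniformly for $t \geq 2$. This is the only arithmetic input required; in particular no appeal to the Ramanujan conjecture, nor to the Kim--Sarnak bounds, enters at this step.

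Finally I would invoke the Riemann--Stieltjes relation between $S$ and $L$ displayed in the introduction. Writing $L(x) = \int_{1^-}^{x} u^{-1}\, dS(u)$ and integrating by parts yields $L(x) = S(x)/x + \int_{1}^{x} S(u)\, u^{-2}\, du$, and inserting the fourth-moment bound gives
$$ L(x) \ll \log x + \int_{1}^{x} \frac{\log u}{u}\, du \ll \log x + (\log x)^2 \ll \log^2 x, $$
which is exactly the asserted estimate. I do not anticipate any serious obstacle here: the entire content of the lemma is the observation that the desired $\log^2 x$ is precisely what the fourth-moment bound produces after a single partial summation, once one recognizes that the $P^-(n) > \mathcal{L}$ condition may be relaxed to the full range without loss for this particular quality of bound.
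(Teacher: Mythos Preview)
Your proposal is correct and follows essentially the same route as the paper: both arguments apply partial summation together with the Hoffstein--Lockhart bound $\sum_{n\leq t}\abs{\lambda_f(n)}^4\ll t\log t$, and in both cases the constraint $P^-(n)>\mathcal{L}$ plays no role in obtaining the $\log^2 x$ estimate. If anything, your version is slightly cleaner, since you discard the smoothness condition explicitly at the outset rather than carrying it through the notation and then implicitly dropping it when invoking the unrestricted fourth-moment bound.
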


\begin{proof}
For $\mathcal{L} < L_2 \leq x$, we get
\begin{align*}
\sum_{n\leq x \atop P^-(n)>\mathcal{L}} \frac{\abs{\lambda_f(n)}^4}{n} &= \int_{L_2}^x \frac{1}{u}\ d \bigg( \sum_{n\leq x \atop P^-(n)>\mathcal{L}(u)} \abs{\lambda_f(n)}^4 \bigg)  \\
&= \left. \frac{\sum \limits_{n\leq u \atop P^-(n)>\mathcal{L}(u)} \abs{\lambda_f(n)}^4 }{u} \right|_{L_2}^x + \bigints_{L_2}^x \frac{\bigg( \sum \limits_{n\leq u \atop P^-(n)>\mathcal{L}(u)} \abs{\lambda_f(n)}^4 \bigg) }{u^2}\ du .\\
\end{align*}

\noindent
From \cite{JhPl}, we observe that there exists non-negative coefficients $\lambda^*(n)$ such that
\begin{equation} \label{e3}
\sum_{n \leq x} \abs{\lambda_f(n)}^4 \leq \sum_{n \leq x} \lambda^*(n) \ll_f x \log x 
\end{equation}
where
$$ \sum_{n=1}^{\infty} \frac{\lambda^*(n)}{n^s} := L(s,{\rm{sym}}^4 f) L^3(s,{\rm{sym}}^2 f) \zeta^2(s) $$
in $\Re(s)>1$.
Hence, 

\begin{align*}
\sum_{n\leq x \atop P^-(n)>\mathcal{L}} \frac{\abs{\lambda_f(n)}^4}{n} &= O \big( \log x \big) + O \big( \log_2 x \big) + O \bigg( \int_{L_2}^x \frac{u\log u}{u^2}\ du \bigg) \\
&\ll \bigg( \int_{L_2}^x \frac{\log u}{u}\ du \bigg) + \log x \\
&\ll \big( \log^2 x \big).
\end{align*}
\end{proof}
\hfill

\begin{lemma} \label{l5}
For sufficiently large $x$, we get
$$ \sum_{n\leq x \atop P^-(n) > \mathcal{L}} \frac{\abs{\lambda_f(n)\mu(n)}}{n} \ll (\log x)^{\frac{5}{4}}. $$ \\
\end{lemma}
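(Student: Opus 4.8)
The plan is to apply Hölder's inequality so as to split the weight $1/n$ into two pieces that are controlled precisely by Lemmas \ref{l3} and \ref{l4}. First I would observe that since $\abs{\mu(n)} \leq 1$, we have $\abs{\lambda_f(n)\mu(n)} \leq \abs{\lambda_f(n)}$, so it suffices to estimate the larger sum $\sum_{n\leq x,\ P^-(n) > \mathcal{L}} \abs{\lambda_f(n)}/n$ (equivalently, the factor $\mu(n)$ merely restricts the range to squarefree $n$, which only helps).

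The key decomposition is to write
$$ \frac{\abs{\lambda_f(n)}}{n} = \frac{\abs{\lambda_f(n)}}{n^{1/4}} \cdot \frac{1}{n^{3/4}} $$
and apply Hölder's inequality with conjugate exponents $p = 4$ and $q = 4/3$. This produces
\begin{align*}
\sum_{n\leq x \atop P^-(n) > \mathcal{L}} \frac{\abs{\lambda_f(n)}}{n}
&\leq \bigg( \sum_{n\leq x \atop P^-(n) > \mathcal{L}} \frac{\abs{\lambda_f(n)}^4}{n} \bigg)^{1/4}
        \bigg( \sum_{n\leq x \atop P^-(n) > \mathcal{L}} \frac{1}{n} \bigg)^{3/4},
\end{align*}
where the exponent $4$ is chosen exactly because the fourth moment is the quantity for which Lemma \ref{l4} supplies a clean bound.

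Next I would invoke Lemma \ref{l4} to bound the first factor by $(\log^2 x)^{1/4} = (\log x)^{1/2}$, and Lemma \ref{l3} to bound the second factor by $(\log x)^{3/4}$. Multiplying the two contributions yields $(\log x)^{1/2 + 3/4} = (\log x)^{5/4}$, which is exactly the claimed estimate.

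There is no substantial obstacle in this argument: the moment exponent $4$ appearing in Lemma \ref{l4}, together with the specific powers of $\log x$ in Lemmas \ref{l3} and \ref{l4}, have been arranged so that the Hölder split with exponents $(4,\,4/3)$ lands precisely on $(\log x)^{5/4}$. The only points requiring minor care are tracking which power of $\log x$ arises from which factor and confirming that the $\mu(n)$ factor is harmlessly absorbed via $\abs{\mu(n)} \leq 1$.
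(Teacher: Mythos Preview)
Your proof is correct. Both your argument and the paper's rely on the same two inputs, Lemmas~\ref{l3} and~\ref{l4}, and both arrive at the exponent $5/4$, but the route is different. The paper uses a threshold (large-values) decomposition: it splits the sum according to whether $\abs{\lambda_f(n)} \leq M$ or $\abs{\lambda_f(n)} > M$, bounds the small-values piece by $M\log x$ via Lemma~\ref{l3}, bounds the large-values piece by $M^{-3}\log^2 x$ via Lemma~\ref{l4} (writing $\abs{\lambda_f(n)} = \abs{\lambda_f(n)}^4/\abs{\lambda_f(n)}^3 \leq \abs{\lambda_f(n)}^4/M^3$), and then optimizes the free parameter to $M \sim (\log x)^{1/4}$. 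Your H\"older argument with exponents $(4,4/3)$ is the cleaner, one-line variant of this: the optimization over $M$ is effectively carried out automatically by H\"older, so you bypass the splitting and balancing step entirely. The paper's threshold method is a touch more elementary and is sometimes more adaptable when the moment information is less clean, but here your approach is more direct and equally rigorous.
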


\begin{proof}
We have
\begin{align*}
\sum_{n\leq x \atop P^-(n)>\mathcal{L}} \frac{\abs{\lambda_f(n)\mu(n)}}{n} &\leq \sum_{n\leq x \atop P^-(n) > \mathcal{L}} \frac{\abs{\lambda_f(n)}}{n} \\
&= \sum_{\substack{n\leq x \\ P^-(n)>\mathcal{L} \\ \abs{\lambda_f(n)} \leq M}} \frac{\abs{\lambda_f(n)}}{n} + \sum_{\substack{n\leq x \\ P^-(n) > \mathcal{L} \\ \abs{\lambda_f(n)} > M}} \frac{\abs{\lambda_f(n)}}{n} \\
&= S_1+S_2
\end{align*}
where $M$ is an open positive quantity which might depend on $x$. \\
We get
\begin{align*}
S_1&=\sum_{\substack{n\leq x\\P^-(n)>\mathcal{L}\\\mid\lambda_f(n)\mid\leq M}}\frac{\mid\lambda_f(n)\mid}{n}\\
&\leq \sum_{\substack{n\leq x\\P^-(n)>\mathcal{L}}}M\frac{1}{n}\\
&\ll M \log x
\end{align*}
using Lemma \ref{l3}.

\noindent
Now,
\begin{align*}
S_2 &= \sum_{\substack{n\leq x \\ P^-(n) > \mathcal{L} \\  \abs{\lambda_f(n)} > M}} \frac{\abs{\lambda_f(n)}}{n} \\
&=\sum_{\substack{n\leq x \\ P^-(n) > \mathcal{L} \\ \abs{\lambda_f(n)} > M}} \frac{\abs{\lambda_f(n)}^4}{\abs{\lambda_f(n)}^3}\frac{1}{n} \\
&\leq \sum_{\substack{n\leq x \\ P^-(n) > \mathcal{L} \\ \abs{\lambda_f(n)} > M}} \frac{1}{M^3} \frac{\abs{\lambda_f(n)}^4}{n}.
\end{align*}

\noindent
So, using Lemma \ref{l4},
$$ S_2 \ll \frac{\log^2 x}{M^3}. $$
Now choose $M$ such that 
$$ M \log x \sim \frac{\log^2x}{M^3} $$
i.e., 
$$M \sim \{ (\log x) \}^\frac{1}{4}.$$

\noindent
Hence,
\begin{align*}
\sum_{n\leq x \atop P^-(n)>\mathcal{L}} \frac{\abs{\lambda_f(n)}}{n} 
&\ll (\log x)^\frac{5}{4}. 
\end{align*}
\end{proof}
\hfill

\begin{lemma} \label{l6}
Let $g$ be a multiplicative function such that $g(n) \geq 0$ for all $n$, and such that there exists constants $A$ and $B$ such that for all $x>1$ both inequalitites
$$ \sum_{p \leq x} g(p) \log p \leq A x h_1(x) $$
and
$$ \sum_p \sum_{\alpha \geq 2} \frac{g(p^{\alpha})}{p^{\alpha}} \log p^{\alpha} \leq B h_2(x) $$
hold where $h_1(x)$(increasing) and $h_2(x)$ are positive functions of $x$ for all $x \geq 1$. Then for $x>1$, we have
$$ \sum_{n \leq x} g(n) \leq (A h_1(x) + B h_2(x) +1) \frac{x}{\log x} \sum_{n \leq x} \frac{g(n)}{n}. $$ \\
\end{lemma}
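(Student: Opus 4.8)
The plan is to bound $S(x):=\sum_{n\le x}g(n)$ by inserting the factor $\log x$ and writing, for each $n\le x$, $\log x=\log n+\log(x/n)$, which gives
\[
S(x)\log x=\sum_{n\le x}g(n)\log n+\sum_{n\le x}g(n)\log(x/n).
\]
Since $\log(x/n)\le x/n$ for $1\le n\le x$ and $g\ge0$, the second sum is at most $x\sum_{n\le x}g(n)/n=xL(x)$, where $L(x):=\sum_{n\le x}g(n)/n$; this term will produce the ``$+1$'' in the final constant. Everything then reduces to showing $\sum_{n\le x}g(n)\log n\le(Ah_1(x)+Bh_2(x))\,xL(x)$, after which dividing through by $\log x$ yields the assertion.

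For the remaining sum I would use the factorisation $\log n=\sum_{p^{a}\| n}a\log p$ (where $p^{a}\| n$ means $p^{a}\mid n$ but $p^{a+1}\nmid n$) and separate the contribution of the exact prime divisors from that of the higher prime powers, writing
\[
\sum_{n\le x}g(n)\log n=\Sigma_1+\Sigma_2,\qquad \Sigma_1:=\sum_{\substack{pm\le x\\ p\nmid m}}g(p)g(m)\log p,
\]
and
\[
\Sigma_2:=\sum_{p}\sum_{a\ge2}a\log p\,g(p^{a})\sum_{\substack{m\le x/p^{a}\\ p\nmid m}}g(m),
\]
using multiplicativity to write $g(p^{a}m)=g(p^{a})g(m)$ whenever $p\nmid m$. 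The whole difficulty is concentrated in $\Sigma_1$: the naive estimate $\sum_{m\le x/p}g(m)\le(x/p)L(x)$ (i.e.\ the trivial bound $S(y)\le yL(y)$) would force us to control $\sum_{p\le x}g(p)\log p/p$, which by partial summation from the first hypothesis is only $\ll Ah_1(x)\log x$, costing an extra factor $\log x$ and wrecking the estimate.

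The key step, and the way past this obstacle, is to interchange the order of summation in $\Sigma_1$ so that the prime variable is summed last. Fixing $m$, summing over $p\le x/m$, dropping the coprimality constraint (legitimate since $g\ge0$), and invoking the first hypothesis at the point $x/m$ together with the monotonicity of $h_1$, I obtain
\[
\Sigma_1\le\sum_{m\le x}g(m)\sum_{p\le x/m}g(p)\log p\le\sum_{m\le x}g(m)\cdot A\frac{x}{m}h_1\!\left(\frac{x}{m}\right)\le Axh_1(x)\sum_{m\le x}\frac{g(m)}{m}=Axh_1(x)L(x),
\]
with no loss of a logarithm. For $\Sigma_2$ the trivial bound is now affordable, precisely because the second hypothesis already carries the weight $p^{-a}$: estimating the inner sum by $S(x/p^{a})\le(x/p^{a})L(x)$ and recalling $\log p^{a}=a\log p$ gives
\[
\Sigma_2\le xL(x)\sum_{p}\sum_{a\ge2}\frac{g(p^{a})\log p^{a}}{p^{a}}\le Bh_2(x)\,xL(x).
\]
Combining $\Sigma_1$, $\Sigma_2$ and the $xL(x)$ from the $\log(x/n)$ term, and dividing by $\log x$, yields $S(x)\le(Ah_1(x)+Bh_2(x)+1)\frac{x}{\log x}L(x)$. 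The main obstacle is exactly the handling of $\Sigma_1$: once the summation order is reversed and the first hypothesis is applied at the shifted argument $x/m$, the surplus logarithm disappears and the rest of the argument is routine bookkeeping.
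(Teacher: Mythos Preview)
Your proof is correct and follows essentially the same approach as the paper: write $S(x)\log x$ as the sum of $\sum g(n)\log(x/n)$, the contribution of exact prime divisors, and the contribution of higher prime powers, then bound these three pieces exactly as you describe. The paper's $S_1,S_2,S_3$ are your $\log(x/n)$ term, $\Sigma_1$, and $\Sigma_2$ respectively, and the key maneuver---reversing the order of summation in $\Sigma_1$ and applying the first hypothesis at $x/m$ with the monotonicity of $h_1$---is identical.
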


\begin{proof}
We follow the arguments as in Lemma 9.6 of \cite{JdFl}. \\

\noindent
Let $S(x) = \sum \limits_{n \leq x} g(n)$ and $L(x) = \sum \limits_{n \leq x} \frac{g(n)}{n}.$ 
Then
$$ L(x) = \sum_{n \leq x} \frac{g(n)}{n} \geq \frac{1}{x} \sum_{n \leq x} g(n) = \frac{1}{x} S(x) $$
i.e., $S(x) \leq x L(x)$.
\begin{align*}
S(x) \log x &= \sum_{n \leq x} g(n) \log x \\
&= \sum_{n \leq x} g(n) \log \left( \frac{x}{n} \right) + \sum_{n \leq x} g(n) \sum_{p||n} \log p + \sum_{n \leq x} g(n) \sum_{\alpha \geq 2 \atop p^{\alpha}||n } \log p^{\alpha} \\
&= S_1 + S_2 + S_3.
\end{align*}
\begin{align*}
S_1 &= \sum_{n \leq x} g(n) \log \left( \frac{x}{n} \right) \\
&\leq \sum_{n \leq x} g(n) \frac{x}{n} \\
&= x \sum_{n \leq x} \frac{g(n)}{n} \\
& \leq x L(x) .
\end{align*}

\noindent
Write $n = mp$ such that $p \nmid m$ in $S_2$. 

\begin{align*}
S_2 &= \sum_{n \leq x} g(n) \sum_{p||n} \log p \\
&= \sum_{mp \leq x} g(mp) \sum_{p \nmid m} \log p \\
&= \sum_{m \leq x} g(m) \sum_{p \leq x/m \atop p \nmid m} g(p) \log p \\
&\leq \sum_{m \leq x} g(m) \sum_{p \leq x/m} g(p) \log p \\
&\leq \sum_{m \leq x} g(m) A \frac{x}{m} h_1 \left( \frac{x}{m} \right) \\
&\leq A x h_1(x) \sum_{m \leq x} \frac{g(m)}{m} \\
&= Ax h_1(x) L(x) .
\end{align*}

\noindent
Write $n = mp^{\alpha}$ such that $p \nmid m$ in $S_3$.

\begin{align*}
S_3 &= \sum_{n \leq x} g(n) \sum_{\alpha \geq 2 \atop p^{\alpha} || n} \log p^{\alpha} \\
&= \sum_{mp^{\alpha} \leq x} g(mp^{\alpha}) \sum_{\alpha \geq 2 \atop p \nmid m} \log p^{\alpha} \\
&= \sum_p \sum_{\alpha \geq 2} g(p^{\alpha}) \log p^{\alpha} \sum_{m \leq x/p^{\alpha} \atop p \nmid m} g(m) \\
&\leq \sum_p \sum_{\alpha \geq 2} g(p^{\alpha}) \log p^{\alpha} S \left( \frac{x}{p^{\alpha}} \right) \\
&\leq \sum_p \sum_{\alpha \geq 2} g(p^{\alpha}) \log p^{\alpha} \frac{x}{p^{\alpha}} L \left( \frac{x}{p^{\alpha}} \right) \\
&\leq x L(x) \sum_p \sum_{\alpha \geq 2} \frac{g(p^{\alpha})}{p^{\alpha}} \log p^{\alpha} \\
&\leq x L(x) B h_2(x).
\end{align*}
\noindent
Finally,
\begin{align*}
S(x) \log x &= S_1 + S_2 + S_3 \\
&\leq \left( 1+ A h_1(x) +B h_2(x) \right) x L(x). \\
\end{align*}

\noindent
Therefore,
$$ S(x) \leq \left( A h_1(x) + B h_2(x) + 1 \right) \frac{x}{\log x} L(x) .$$

\end{proof}
\hfill

\begin{lemma} \label{l7}
For all $x>1$, there exists a positive constant $A$ such that
$$ \sum_{p\leq x} \abs{\lambda_f(p) \mu(p)} \log p \leq Ax \sqrt{\log x} $$ \\
and
$$ \sum_p \sum_{\alpha \geq 2} \frac{\abs{\lambda_f(p^{\alpha}) \mu(p^{\alpha})} \log p^{\alpha}}{p^{\alpha}} = 0.$$
\end{lemma}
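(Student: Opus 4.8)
The plan is to treat the two assertions separately, as they are of very different natures. The second one I would dispatch immediately: since $\mu(p^{\alpha}) = 0$ for every $\alpha \geq 2$, each summand $\abs{\lambda_f(p^{\alpha})\mu(p^{\alpha})}$ vanishes identically, so the double sum is literally zero and no analytic input is required.

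For the first assertion, I would begin by noting that $\abs{\mu(p)} = 1$ for every prime $p$, so the sum in question is exactly $\sum_{p \leq x} \abs{\lambda_f(p)} \log p$. The first move is to peel off the logarithm via $\log p \leq \log x$, which reduces the problem to establishing the bound $\sum_{p \leq x} \abs{\lambda_f(p)} \ll x/\sqrt{\log x}$; multiplying back by $\log x$ then produces exactly the claimed $Ax\sqrt{\log x}$ for a suitable constant $A$.

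To control $\sum_{p \leq x} \abs{\lambda_f(p)}$, I would apply H\"{o}lder's inequality with conjugate exponents $4/3$ and $4$, writing each summand as $1 \cdot \abs{\lambda_f(p)}$, to obtain
$$ \sum_{p \leq x} \abs{\lambda_f(p)} \leq \bigg( \sum_{p \leq x} 1 \bigg)^{3/4} \bigg( \sum_{p \leq x} \abs{\lambda_f(p)}^4 \bigg)^{1/4}. $$
For the first factor I would invoke the prime number theorem (or even just Chebyshev's estimate) in the form $\pi(x) \ll x/\log x$. For the second factor I would use the Hoffstein--Lockhart fourth-moment estimate recorded in (\ref{e3}), observing that restricting the range of summation from all $n \leq x$ to primes only decreases the sum, so that $\sum_{p \leq x} \abs{\lambda_f(p)}^4 \leq \sum_{n \leq x} \abs{\lambda_f(n)}^4 \ll_f x \log x$. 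Combining the two factors yields $(x/\log x)^{3/4}(x \log x)^{1/4} = x(\log x)^{-1/2}$, which is precisely the intermediate bound sought.

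I do not anticipate a genuine obstacle here; the lemma is essentially a bookkeeping exercise once the correct tools are identified. The only point demanding care is the choice of H\"{o}lder exponents: the entire $\sqrt{\log x}$ saving originates from the prime-counting factor $\pi(x) \ll x/\log x$, while the exponent $4$ is forced upon us because the fourth moment is the one for which a clean, \emph{unconditional} bound is available through (\ref{e3}), without recourse to the Ramanujan conjecture. Using Cauchy--Schwarz with the second moment instead would demand the Rankin--Selberg estimate $\sum_{n \leq x} \abs{\lambda_f(n)}^2 \ll x$, which is not among the inputs assembled in the paper, so the fourth-moment route via H\"{o}lder is the natural one for keeping the argument both self-contained and unconditional.
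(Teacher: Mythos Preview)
Your proof is correct, but the route differs from the paper's. The paper does not pull out $\log p \leq \log x$ and then apply H\"{o}lder; instead it splits the sum at a threshold $\abs{\lambda_f(p)} \lessgtr L$, bounding the small piece by $L\sum_{p\leq x}\log p \ll Lx$ via Chebyshev and the large piece by writing $\abs{\lambda_f(p)} = \abs{\lambda_f(p)}^4/\abs{\lambda_f(p)}^3 \leq \abs{\lambda_f(p)}^4/L^3$ and invoking the fourth-moment bound (\ref{e3}), then optimising to $L\sim\sqrt{\log x}$. Your H\"{o}lder argument with exponents $(4/3,4)$ is the cleaner, one-line version of exactly this balancing act---the two methods are formally equivalent and consume the same unconditional input (\ref{e3}). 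The paper's split-and-optimise presentation has the minor advantage of keeping $\log p$ inside the small-values sum (so that Chebyshev's $\theta(x)\ll x$ is used directly rather than $\pi(x)\ll x/\log x$ followed by multiplication by $\log x$), but this is cosmetic. The second assertion is handled identically in both.
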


\begin{proof}
We get
\begin{align*}
\sum_{p\leq x} \abs{\lambda_f(p)\mu(p)} \log p &\leq \sum_{p\leq x} \abs{\lambda_f(p)} \log p \\
&= \sum_{p\leq x \atop \abs{\lambda_f(p)} \leq L} \abs{\lambda_f(p)} \log p + \sum_{p\leq x \atop \abs{\lambda_f(p)}>L} \frac{\abs{\lambda_f(p)}^4}{\abs{\lambda_f(p)}^3} \log p \\
&\leq L \sum_{p\leq x} \log p + \frac{\log x}{L^3} \sum_{p\leq x} \abs{\lambda_f(p)}^4 \\
&\leq L \sum_{p\leq x} \log p + \frac{\log x}{L^3} \sum_{n\leq x} \abs{\lambda_f(n)}^4 \\
&\leq L \sum_{p\leq x} \log p + \frac{\log x}{L^3} \sum_{n\leq x} \lambda^*(n) \\
&\ll_f  x L + x \frac{(\log x)^2}{L^3}
\end{align*}
by prime number theorem and inequality (\ref{e3}). \\

\noindent
Now choose $L$ such that
$$ x L \sim x \frac{(\log x)^2}{L^3} $$
i.e.,  
$$ L \sim \sqrt{\log x}. $$
Hence,
$$ \sum_{p\leq x} \abs{\lambda_f(p) \mu(p)} \log p \leq Ax \sqrt{\log x}. $$
Second result follows trivially since $\mu(p^{\alpha})=0$ for $\alpha \geq 2$.
\end{proof}
\hfill

\begin{lemma} \label{l8}
We have
$$ \sum_{n\leq x} \abs{\lambda_f(n)\mu(n)} \leq C_2 \frac{x}{\sqrt{\log x}} \sum_{n\leq x} \frac{\abs{\lambda_f(n)\mu(n)}}{n}. $$ \\
\end{lemma}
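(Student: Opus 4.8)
The plan is to apply Lemma \ref{l6} directly to the multiplicative function $g(n) = \abs{\lambda_f(n)\mu(n)}$, using Lemma \ref{l7} to furnish exactly the two hypotheses that Lemma \ref{l6} requires. The whole argument is essentially an invocation of the machinery already assembled.

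First I would confirm that $g(n) = \abs{\lambda_f(n)\mu(n)}$ is a nonnegative multiplicative function, as Lemma \ref{l6} demands. Since $f$ is a Hecke eigencusp form, $\lambda_f$ is multiplicative, hence so is $\abs{\lambda_f}$; and $\abs{\mu}$ is the indicator function of the squarefree integers, which is multiplicative. The pointwise product of two nonnegative multiplicative functions is again nonnegative and multiplicative, so $g$ qualifies.

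Next I would match the hypotheses of Lemma \ref{l6} against the conclusions of Lemma \ref{l7}. The first bound in Lemma \ref{l7},
$$ \sum_{p\leq x} \abs{\lambda_f(p)\mu(p)} \log p \leq Ax\sqrt{\log x}, $$
is precisely $\sum_{p\leq x} g(p)\log p \leq A x\, h_1(x)$ with the increasing positive choice $h_1(x) = \sqrt{\log x}$. The second conclusion of Lemma \ref{l7},
$$ \sum_p \sum_{\alpha\geq 2} \frac{\abs{\lambda_f(p^{\alpha})\mu(p^{\alpha})}\log p^{\alpha}}{p^{\alpha}} = 0, $$
lets me take $B=0$ (so that $B\,h_2(x)=0$ with $h_2$ any positive function). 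Feeding these into Lemma \ref{l6} gives
$$ \sum_{n\leq x} g(n) \leq \left( A\sqrt{\log x} + 1 \right) \frac{x}{\log x} \sum_{n\leq x} \frac{g(n)}{n}. $$
Finally, for $x$ sufficiently large one has $A\sqrt{\log x} + 1 \ll \sqrt{\log x}$, and since $\sqrt{\log x}/\log x = 1/\sqrt{\log x}$, this collapses to
$$ \sum_{n\leq x} \abs{\lambda_f(n)\mu(n)} \leq C_2 \frac{x}{\sqrt{\log x}} \sum_{n\leq x} \frac{\abs{\lambda_f(n)\mu(n)}}{n} $$
for an appropriate absolute constant $C_2$, which is the claim.

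There is no real obstacle here: all the analytic content (the prime number theorem input and the fourth-moment bound $\sum_{n\leq x}\abs{\lambda_f(n)}^4 \ll_f x\log x$ via $\lambda^*(n)$) has already been spent in proving Lemma \ref{l7}, and the passage from $L(x)$ to $S(x)$ is handled abstractly by Lemma \ref{l6}. The only points requiring care are the verification of multiplicativity of $g$ and the correct identification of $h_1$, $h_2$, $A$, and $B$ so that the factor $(A h_1(x) + B h_2(x) + 1)/\log x$ simplifies to the desired $1/\sqrt{\log x}$; both are bookkeeping rather than genuine difficulties.
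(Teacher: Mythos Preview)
Your proposal is correct and follows exactly the same approach as the paper, which simply states that the result follows by taking $g(n) = \abs{\lambda_f(n)\mu(n)}$ in Lemma \ref{l6} using Lemma \ref{l7}. Your version just makes explicit the bookkeeping (multiplicativity of $g$, the choices $h_1(x)=\sqrt{\log x}$ and $B=0$) that the paper leaves implicit.
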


\begin{proof}
The result follows by taking $g(n) = \abs{\lambda_f(n) \mu(n)}$ in Lemma \ref{l6} using Lemma \ref{l7}.
\end{proof}
\hfill

\end{section}

\begin{section}{Proof of Theorem \ref{t1}} \hfill

\noindent
First we note that (with $\mathcal{L} = (\log x)^2$),
\begin{align*}
\sum_{n\leq \mathcal{L}} \frac{\abs{\lambda_f(n)\mu(n)}}{n} &\leq \sum_{n \leq \mathcal{L}} \frac{\abs{\lambda_f(n)}}{n} \\
&\leq \left( \sum_{n \leq \mathcal{L}} \frac{\abs{\lambda_f(n)}^4}{n} \right)^{1/4} \left( \sum_{n \leq \mathcal{L}} \frac{1}{n} \right)^{3/4} \\
&\leq C_3 (\log \mathcal{L})^{1/2} (\log \mathcal{L})^{3/4} \\
&\leq C_3 (\log \mathcal{L})^{5/4} \\
&\leq C_4 (\log \log x)^{5/4}. \\
\end{align*}

\noindent
For $n>\mathcal{L}$, we write $n=m_1 m_2$ where $m_1 = \prod \limits_{p|n \atop p \leq \mathcal{L}} p$. Thus,

\begin{align*}
\sum_{\mathcal{L} < n \leq x} \frac{\abs{\lambda_f(n)\mu(n)}}{n} &= \sum_{m_1 \leq x \atop P^+(m_1) \leq \mathcal{L}} \frac{\abs{\lambda_f(m_1)\mu(m_1)}}{m_1} \sum_{m_2 \leq \frac{x}{m_1} \atop P^-(m_2) > \mathcal{L}} \frac{\abs{\lambda_f(m_2)\mu(m_2)}}{m_2} \\
&\ll \sum_{m_1 \leq x \atop P^+(m_1) \leq \mathcal{L}}\frac{\abs{\lambda_f(m_1)\mu(m_1)}}{m_1} \left( \log \left( \frac{x}{m_1} \right) \right)^\frac{5}{4} \\
&\ll x^{-\frac{25}{64}+\epsilon} \exp \bigg( C_1 \frac{\log x}{\log_2x} \bigg) (\log x)^\frac{5}{4}
\end{align*}
using Lemmas \ref{l2} and \ref{l5}.

\noindent
By Lemma \ref{l8}, we have
\begin{align*}
\sum_{n\leq x} \abs{\lambda_f(n)\mu(n)} & \leq C_2 \frac{x}{\sqrt{\log x}} \sum_{n\leq x} \frac{\abs{\lambda_f(n)\mu(n)}}{n} \\
&\leq C_5 \frac{x}{\sqrt{\log x}} \left\{ (\log \log x)^{5/4} + x^{-{25/64}+\epsilon} \exp \bigg( C_1 \frac{\log x}{\log_2x} \bigg) (\log x)^{5/4} \right\} \\
&\ll_f \frac{x (\log \log x)^{5/4}}{\sqrt{\log x}} .
\end{align*}
This completes the proof of Theorem \ref{t1}.

\end{section}
\hfill\\

\begin{section}{Sketch of proof of Theorem \ref{t2}} \hfill\\

\noindent
A natural number $n = p_1^{a_1} p_2^{a_2} \dots p_r^{a_r}$ is called $k$-free if $a_i \leq k-1 \: \forall \: i=1,2,\dots,r.$ Let

\begin{equation*}
h_k(n) = 
\begin{cases}
1 \quad \text{$n$ is $k$-free}, \\
0 \quad \text{otherwise}. \\
\end{cases}
\end{equation*}

\noindent
Therefore,
\begin{align*}
\sum_{n=1}^{\infty} \frac{h_k(n)}{n^s} &= \prod_p (1+p^{-s} + \cdots + p^{-(k-1)s}) \\
&= \prod_p \frac{1-p^{-ks}}{1-p^{-s}} \\
&= \frac{\zeta(s)}{\zeta(ks)}. \\
\end{align*}

\noindent
We know that(see \cite{Av}),
$$ \sum_{n \leq x} h_k(n) = \frac{x}{\zeta(k)} + O \left( x^{1/k} \exp \left( -C_6 (\log x)^{3/5}(\log \log x)^{-1/5} \right) \right) .$$

\noindent
Similar to the proof of Theorem \ref{t1}, we have 
$$ \sum_{n \leq \mathcal{L}} \frac{\abs{\lambda_f(n)}h_k(n)}{n} \ll_{f,k} (\log \log x)^{5/4}. $$ \\

\noindent
For $n> \mathcal{L}$, we write $n=m_1 m_2$ where $m_1 = p_1^{a_1} p_2^{a_2} \dots p_r^{a_r}$ and $m_2 = q_1^{b_1} q_2^{b_2} \dots q_s^{b_s}$ with $a_i \leq k-1$ for all $i=1,2,\dots,r$ and $b_j \leq k-1$ for all $j=1,2,\dots,s$ such that $p_i \leq \mathcal{L}$ and $q_j > \mathcal{L}$. Hence, $(m_1,m_2)=1$. Thus,

\begin{align*}
 \sum_{\mathcal{L} < n \leq x} \frac{\abs{\lambda_f(n)}h_k(n)}{n} 
&\leq \sum_{m_1 \leq x \atop P^+(m_1) \leq \mathcal{L}} \frac{\abs{\lambda_f(m_1)}h_k(m_1)}{m_1} \sum_{m_2 \leq \frac{x}{m_1} \atop P^-(m_2) > \mathcal{L}} \frac{\abs{\lambda_f(m_2)}h_k(m_2)}{m_2} \\
&\ll_{f,k} \sum_{m_1 \leq x \atop P^+(m_1) \leq \mathcal{L}}\frac{\abs{\lambda_f(m_1)}h_k(m_1)}{m_1} \left( \log \left( \frac{x}{m_1} \right) \right)^\frac{5}{4} \\
&\ll_{f,k,\epsilon} x^{-\frac{25}{64}+\epsilon} \exp \bigg( C_1 \frac{\log x}{\log_2x} \bigg) (\log x)^\frac{5}{4}
\end{align*}
using arguments similar to that of Lemmas \ref{l2} and \ref{l5}. \\

\noindent
We have,
\begin{align*}
\sum_p \sum_{\alpha \geq 2} \frac{\abs{\lambda_f(p^{\alpha})}h_k(p^{\alpha}) \log p^{\alpha}}{p^{\alpha}} & \leq \sum_p \sum_{\alpha \geq 2} \frac{\abs{\lambda_f(p^{\alpha})} \log p^{\alpha}}{p^{\alpha}} \\
&\leq \sum_p \sum_{\alpha \geq 2} \frac{p^{\frac{7}{64} \alpha} (\alpha+1) \alpha \log p}{p^{\alpha}} \\
&\leq \sum_p \sum_{\alpha \geq 2} \frac{2 \alpha^2 \log p}{p^{\frac{57}{64} \alpha}} \\
&= \sum_p \sum_{\alpha \geq 2} \frac{2 \alpha^2 \log p}{ p^{\frac{7}{8} \alpha} e^{\frac{\alpha}{64} \log p}} \\
& \leq \sum_p \sum_{\alpha \geq 2} \frac{2 \alpha^2 \log p}{p^{\frac{7}{8} \alpha} \frac{(\frac{\alpha}{64} \log p)^2}{2!}} \\
& \ll \sum_p \sum_{\alpha \geq 2} \frac{1}{p^{\frac{7}{8} \alpha} \log p} \\
& \ll \sum_p \sum_{\alpha \geq 2} \frac{1}{p^{\frac{7}{8} \alpha}} \\
& = \sum_p \frac{\frac{1}{p^{14/8}}}{1-\frac{1}{p^{7/8}}} \\
& = \sum_p \frac{1}{p^{7/8}(p^{7/8} -1)} \\
& \ll B. 
\end{align*}

\noindent
Using similar result as in Lemma \ref{l8}, we get
\begin{align*}
\sum_{n \in S_k} \abs{\lambda_f(n)} &= \sum_{n \leq x} \abs{\lambda_f(n)}h_k(n) \\
&\ll_{f,k} \frac{x}{\sqrt{\log x}} \sum_{n \leq x} \frac{\abs{\lambda_f(n)}h_k(n)}{n} \\
&\ll_{f,k,\epsilon} \frac{x}{\sqrt{\log x}} \left\{ (\log \log x)^{5/4} + x^{-{25/64}+\epsilon} \exp \bigg( C_1 \frac{\log x}{\log_2x} \bigg) (\log x)^{5/4} \right\} \\
&\ll_{f,k,\epsilon} \frac{x (\log \log x)^{5/4}}{\sqrt{\log x}}  
\end{align*}
which completes the proof.
\end{section}
\hfill\\

\begin{section}{Concluding remarks} \hfill\\

\noindent
In this section, we discuss two examples where a similar analysis gives an upper bound for $\sum \limits_{n \in S} \abs{\lambda_f(n)}$ which is not of the order $o(|S|)$. Hence, we are not able to ensure that $\abs{\lambda_f(n)}$ assumes smaller values on average on these sets. First is the primes set  and second is the squarefull numbers set. \\

\noindent
Define
\begin{equation*}
\chi_1(n) = 
\begin{cases}
1 \quad \text{if n is a prime}, \\
0 \quad \text{otherwise}.
\end{cases}
\end{equation*}

\noindent
Note that
$$ \sum_{n \leq x} \chi_1(n) = \pi(x) \ll \frac{x}{\log x} .$$ \\

\noindent
Then, 
\begin{align*}
\sum_{n \in P} \abs{\lambda_f(n)} &= \sum_{1 \leq n \leq x} \abs{\lambda_f(n)} \chi_1(n) \\
& \leq \left( \sum_{n \leq x} \abs{\lambda_f(n)}^4 \right)^{1/4} \left( \sum_{n \leq x} \left( \chi_1(n) \right)^{4/3} \right)^{3/4} \\
& \ll_f \left( \sum_{n \leq x} \lambda^*(n) \right)^{1/4} \left( \sum_{p \leq x} 1 \right)^{3/4} \\
& \ll_f ( x \log x )^{1/4} \left( \frac{x}{\log x} \right)^{3/4} \\
& \ll_f \frac{x}{\sqrt{\log x}}. \\
\end{align*}

\noindent
A number $n = p_1^{a_1} p_2 ^{a_2} \dots p_r^{a_r}$ is called a squarefull number if $a_i \geq 2$ for all $i=1,2,\dots,r$. Let
\begin{equation*}
\chi_2(n) = 
\begin{cases}
1 \quad \text{if n is squarefull}, \\
0 \quad \text{otherwise}.
\end{cases}
\end{equation*}

\noindent
From \cite{Av}, we have
$$ \sum_{n \leq x} \chi_2(n) = \frac{\zeta(3/2)}{\zeta(3)} x^{1/2} + O(x^{1/3}) .$$ \\

\noindent
Let $S^*$ denotes the set of squarefull numbers. Then, 
\begin{align*}
\sum_{n \in S^*} \abs{\lambda_f(n)} &= \sum_{1 \leq n \leq x} \abs{\lambda_f(n)} \chi_2(n) \\
& \leq \left( \sum_{n \leq x} \abs{\lambda_f(n)}^4 \right)^{1/4} \left( \sum_{n \leq x} \left( \chi_2(n) \right)^{4/3} \right)^{3/4} \\
& \ll_f \left( \sum_{n \leq x} \lambda^*(n) \right)^{1/4} \left( x^{1/2} \right)^{3/4} \\
& \ll_f ( x \log x )^{1/4} x^{3/8} \\
& \ll_f x^{5/8} (\log x)^{1/4}. \\
\end{align*}

\noindent
It is important to note that in these two cases, the study of cognated sums $S(x)$ and $L(x)$ will actually lead to weaker estimates than what H\"{o}lder's inequality would give. Thus we observe that the averaging result in (\ref{e3}) and Lemma \ref{l6} have certain limitations which we had already mentioned in Remark \ref{r2}.

\end{section}
\hfill\\

\section*{Acknowledgments}
\noindent
First author is thankful to UGC for its supporting NET Junior Research Fellowship with UGC Ref. No. : 191620054184 \\
Second author is thankful to UGC for its supporting NET Junior Research Fellowship with UGC Ref. No. : 1004/(CSIR--UGC NET Dec. 2017). \\

\end{document}